\def\@tocline#1#2#3#4#5#6#7{\relax
  \ifnum #1>\c@tocdepth 
  \else
    \par \addpenalty\@secpenalty\addvspace{#2}%
    \begingroup \hyphenpenalty\@M
    \@ifempty{#4}{%
      \@tempdima\csname r@tocindent\number#1\endcsname\relax
    }{%
      \@tempdima#4\relax
    }%
    \parindent\z@ \leftskip#3\relax \advance\leftskip\@tempdima\relax
    \rightskip\@pnumwidth plus4em \parfillskip-\@pnumwidth
    #5\leavevmode\hskip-\@tempdima
      \ifcase #1
       \or\or \hskip 1em \or \hskip 2em \else \hskip 3em \fi%
      #6\nobreak\relax
      \dotfill
      \hbox to\@pnumwidth{\@tocpagenum{#7}}
    \par
    \nobreak
    \endgroup
  \fi}
\newtheorem{theorem}{Theorem}[section]
\newtheorem{lemma}[theorem]{Lemma}
\newtheorem{corollary}[theorem]{Corollary}
\theoremstyle{definition}
\theoremstyle{remark}
\newtheorem{remark}[theorem]{Remark}
\newcommand\R{{\ensuremath {\mathbb R} }}
\newcommand\1{{\ensuremath {\mathds 1} }}
\newcommand\nn{\nonumber}
\renewcommand\phi{\varphi}
\newcommand{\bA}{\mathbf{A}}
\newcommand{\be}{\mathbf{e}}
\newcommand{\gH}{\mathfrak{H}}
\newcommand{\cS}{\mathcal{S}}
\newcommand{\cE}{\mathcal{E}}
\renewcommand{\epsilon}{\varepsilon}
\newcommand{\norm}[1]{ \left| \! \left| #1 \right| \! \right| }
\DeclareMathOperator{\tr}{{\rm Tr}}
\DeclareMathOperator{\Tr}{{\rm Tr}}
\renewcommand{\ge}{\geqslant}
\renewcommand{\le}{\leqslant}
\renewcommand{\geq}{\geqslant}
\renewcommand{\leq}{\leqslant}
\renewcommand{\hat}{\widehat}
\newcommand{\im}{\mathrm{i}}
\newcommand{\eps}{\varepsilon}
\numberwithin{equation}{section}
\begin{document}

\title{Improved stability for 2D attractive Bose gases}

\author[P.T. Nam]{Phan Th\`anh Nam}
\address{Department of Mathematics, LMU Munich, Theresienstrasse 39, 80333 Munich, and Munich Center for Quantum Science and Technology (MCQST), Schellingstr. 4, 80799 Munich, Germany} 
\email{nam@math.lmu.de}

\author[N. Rougerie]{Nicolas Rougerie}
\address{Universit\'e Grenoble-Alpes \& CNRS,  LPMMC (UMR 5493), B.P. 166, F-38042 Grenoble, France}
\email{nicolas.rougerie@grenoble.cnrs.fr}

\date{September, 2019}

\begin{abstract}
We study the ground-state energy of $N$ attractive bosons in the plane. The interaction is scaled for the gas to be dilute, so that the corresponding mean-field problem is a local non-linear Schr\"odinger (NLS) equation. We improve the conditions under which one can prove that the many-body problem is stable (of the second kind). This implies, using previous results, that the many-body ground states and dynamics converge to the NLS ones for an extended range of diluteness parameters.   
\end{abstract}

\maketitle


\section{Introduction}

Consider a 2D non-relativistic gas of bosonic particles trapped in the external potential 
$$ V:\R^2 \mapsto \R^+, \quad V(x) \underset{|x|\to \infty}{\longrightarrow} + \infty$$
and plunged in an external magnetic field $B$ of vector potential 
$$\bA:\R^2 \mapsto \R^2$$
such that $\mathrm{curl} \, \bA = B$. The particles interact via the pair potential of the form 
\begin{equation}\label{eq:wN}
\frac{1}{N-1} N^{2\beta} w (N^{\beta} x), 
\end{equation}
with a fixed ($N$-independent) parameter $\beta >0$ and a fixed function
$$ w:\R^2 \mapsto \R, \quad w(x) = w(-x), \quad w(x) \underset{|x|\to\infty}{\longrightarrow} 0.$$
Mathematically this means looking at the action of the many-body Schr\"odinger operator
\begin{equation}\label{eq:hamil}
H_N := \sum_{j=1} ^N \left\lbrace \left( -\im \nabla_j + \bA (x_j)\right)^2 + V(x_j) \right\rbrace+ \frac{1}{N-1} \sum_{1\leq i < j \leq N} N^{2\beta} w (N^{\beta} (x_i-x_j))
\end{equation}
on the symmetric space 
$$\gH_N=L^2_{\rm sym} (\R^{2N})=\bigotimes^N_{\rm sym} L^2(\R^2).$$
For nice data $V,w,\bA$, this operator is bounded from below with the core domain $\gH_N \cap C_c^\infty(\R^{2N})$, and thus can be extended to be a self-adjoint operator by Friedrichs' method. The non-trivial question is then that of stability of the second kind~\cite{LieSei-09}, i.e. whether $H_N \ge -CN$ for a constant $C$ independent of $N$. Note that when the negative part $w_-:=\min\{w,0\}$ is nonzero, the interaction energy with the rescaled potential in~\eqref{eq:wN} may be very negative, and we will see that the stability question cannot be  answered using merely the uncertainty principle. 

Out of $H_N$ one constructs a corresponding non-linear Schr\"odinger (NLS) functional 
\begin{equation}\label{eq:nls}
\cE ^{\rm nls} [u] = \lim_{N\to \infty} \frac{1}{N} \langle u^{\otimes N} |H_N| u^{\otimes N} \rangle = \langle u | h | u \rangle_{L^2} + \frac{b}{2} \int_{\R^2} |u|^4  
\end{equation}
where 
$$ b= \int_{\R^2} w$$
and 
\begin{equation}\label{eq:hamil onebody} 
h= \left( -\im \nabla + \bA (x)\right)^2 + V 
\end{equation}
is the one-body Hamiltonian. Clearly, $\cE^{\rm nls}$ is bounded from below under the constraint $\|u\|_{L^2(\R^2)}=1$ if and only if 
\begin{equation}\label{eq:stab nls}
 b \geq - a^*
\end{equation}
where $a^*$ is the optimal constant for the Gagiardo-Nirenberg inequality 
\begin{align} \label{eq:GN-inequality} \left( \int_{\R^2} |u|^2 \right) \left( \int_{\R^2} |\nabla u|^2 \right) \ge \frac{a^*}{2}\int_{\R^2} |u|^4, \quad \forall u\in H^1(\R^2).
\end{align}
See~\cite{Weinstein-83,GuoSei-13,Maeda-10,Kwong-89,Frank-14} for references.

Given the variational construction \eqref{eq:nls}, Condition~\eqref{eq:stab nls} is necessary for $H_N$ to be stable of the second kind. Sufficient conditions are in fact more stringent. Simple considerations show that we must demand more than~\eqref{eq:stab nls}. Here we work with the condition 
\begin{equation}\label{eq:true stab}
 \int_{\R^2} |w_-| < a^*
\end{equation}
and refer to~\cite{LewNamRou-14c} for a refinement (Hartree stability). 

In this note we are interested in the range of diluteness parameter $\beta >0$ for which the stability of the second kind can be shown to hold. Indeed, $\beta$ measures how fast the interactions converge to point-like ones. Implicit in the above is that the reference length-scale of the system is fixed, set by that of the one-body Hamiltonian $h$ (think of particles in a fixed box if you wish). Hence $N^{-\beta}$ measures the range of the interaction potential in units of the reference length scale. 
From the point of view of interactions, $N^{\beta -1/2}$ is the average number of particles a tagged one interacts with at a time, for $N^{-1/2}$ is the mean inter-particle distance. 


It is well-known that large quantum interacting systems are harder to deal with for large values of $\beta$, in particular, in our 2D case, for $\beta > 1/2$ which is the threshold to have a dilute system (few but strong inter-particle collisions). In this note we make the remark that a combination of the tools in~\cite{LewNamRou-15,Rougerie-19} allows to prove stability of the second kind under the condition $ \beta < 1.$ This extends the range of validity of methods~\cite{LewNamRou-14c,LewNamRou-15,CheHol-15,JebPic-17} dealing with the large-$N$ limit of $H_N$.

\bigskip

\noindent\textbf{Acknowledgements.} We thank Mathieu Lewin and Fernando G.S.L. Brand\~ao for helpful discussions. We received fundings from the European Research Council (ERC) under the European Union's Horizon 2020 Research and Innovation Programme (Grant agreement CORFRONMAT No 758620), and from the
Deutsche Forschungsgemeinschaft (DFG, German Research Foundation) under Germany's Excellence Strategy (EXC-2111-390814868).

\section{Main result}

For simplicity we assume that $V,\bA,w$ are smooth. The actual condition on the interaction potential we need is 
\begin{equation}\label{eq:asum w}
w\in L^1(\R^2)\cap L^2(\R^2).
\end{equation}
We also assume that there exist positive constants  $s>0$ and $c >0$ such that 
\begin{equation}\label{eq:trapping}
V(x) \geq c^{-1} |x|^s - c   
\end{equation}
and
\begin{equation}\label{eq:mag}
|\bA(x)| \le c e^{c|x|}.   
\end{equation}

Our main result is 

\begin{theorem}[\textbf{Stability of attractive 2D Bose gases}]\label{thm:main}\mbox{}\\
Under the above assumptions \eqref{eq:true stab}-\eqref{eq:asum w}-\eqref{eq:trapping}-\eqref{eq:mag}, for any fixed $0<\beta < 1$  there exists a constant $C>0$ such that  
\begin{equation}\label{eq:stability}
H_N \geq - C N 
\end{equation}
as an operator acting on $\gH_N = L^2_{\rm sym} (\R^{2N})$.
\end{theorem}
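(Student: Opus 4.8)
The plan is to turn the operator bound into a controlled competition between kinetic energy and the attractive part of the interaction. First I would strip the problem down by elementary reductions. Since $V$ and the interaction are multiplication operators, the diamagnetic inequality lets me replace the magnetic kinetic energy by $\sum_j\int|\nabla_j|\Psi||^2$, so it suffices to treat $\bA=0$ and nonnegative $\Psi$. The trapping bound \eqref{eq:trapping} gives $V\ge -c$, whence $\sum_jV(x_j)\ge -cN$ can be absorbed into the right-hand side, and the repulsive part $w_+\ge 0$ only helps and may be discarded, leaving $w\ge -w_-$. Thus it is enough to prove, as a quadratic form on nonnegative symmetric functions,
\[
\sum_{j=1}^N(-\Delta_j)\;-\;\frac{1}{N-1}\sum_{1\le i<j\le N}(w_-)_N(x_i-x_j)\;\ge\;-CN,
\]
where $(w_-)_N(x)=N^{2\beta}w_-(N^\beta x)$ has $\int(w_-)_N=\|w_-\|_{L^1}$.

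Writing $K$ for the kinetic energy and $I\le 0$ for the attractive energy of a given state, it suffices to find $\delta>0$ with $I\ge -(1-\delta)K-CN$, since then $K+I\ge \delta K-CN\ge -CN$. As $I=-\tfrac{N}{2}\,\Tr[(w_-)_N(x_1-x_2)\,\Gamma^{(2)}]$ with $\Gamma^{(2)}$ the (trace-one) two-body density matrix, the target is a many-body inequality bounding the attractive pair interaction by the one-body kinetic energy with a constant strictly below the stability threshold. A purely two-body operator bound is hopeless: in the relative coordinate $r$, scaling $r\mapsto N^{-\beta}s$ turns $-\theta\Delta_r-(w_-)_N(r)$ into $N^{2\beta}\bigl(-\theta\Delta_s-w_-(s)\bigr)$, whose bottom is $N^{2\beta}e_0\to-\infty$ because an attractive 2D well always binds ($e_0<0$). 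The stabilization must therefore be collective, which is where the condition $\int|w_-|<a^*$ from \eqref{eq:true stab} enters through the many-body structure.

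To extract the collective effect I would invoke the quantitative quantum de Finetti theorem. Localizing the one-body space by spectral projections of $h$ (whose resolvent is compact thanks to $V\to+\infty$), one approximates $\Gamma^{(2)}$ in trace norm by a convex combination $\int|u^{\otimes 2}\rangle\langle u^{\otimes 2}|\,d\mu(u)$ over normalized one-body states. On each factorized state the attractive energy is $\int\!\!\int(w_-)_N(x-y)|u(x)|^2|u(y)|^2$, which converges to $\|w_-\|_{L^1}\int|u|^4$; the Gagliardo--Nirenberg inequality \eqref{eq:GN-inequality} gives $\int|u|^4\le \tfrac{2}{a^*}\int|\nabla u|^2$, so that, matching the de Finetti representations of $I$ and of $K$, one obtains $-I\lesssim \tfrac{\|w_-\|_{L^1}}{a^*}\,K$ up to errors. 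Since $\|w_-\|_{L^1}<a^*$, this yields exactly $\delta=1-\|w_-\|_{L^1}/a^*>0$. This is the precise mechanism by which \eqref{eq:true stab} forbids collapse at the mean-field level.

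The hard part is the error control, and this is where $\beta<1$ is decided and where the tools of \cite{LewNamRou-15,Rougerie-19} must be combined. Because $(w_-)_N$ has sup-norm of order $N^{2\beta}$, resolving it directly would force a de Finetti localization to dimension of order $N^{2\beta}$, whose error is amplified by $\|(w_-)_N\|_\infty$ and is far too large. The remedy, following the philosophy of those works, is to not resolve the interaction at its native scale $N^{-\beta}$: I would reserve a fraction of $K$, coarse-grain $(w_-)_N$ to an intermediate scale $\ell$ with $N^{-\beta}\ll\ell\ll 1$, control the fluctuation of the true potential about its coarse-grained version by the reserved kinetic energy (a Sobolev-type estimate whose constant stays finite precisely because the coarse-grained potential is no longer singular), and apply de Finetti only to the regular scale-$\ell$ interaction, where the localization dimension is of order $\ell^{-2}$ and the de Finetti error is of order $\ell^{-2}/N$. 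Optimizing $\ell$ between the coarse-graining error and the de Finetti error, while checking that the eigenvalue counting of $h$ (governed by \eqref{eq:trapping}) and the weak convergence $(w_-)_N\rightharpoonup\|w_-\|_{L^1}\delta_0$ each contribute only $O(N)$, should yield admissibility exactly for $\beta<1$. Making this scale optimization quantitative is the main technical obstacle.
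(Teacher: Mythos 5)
Your overall architecture (reduce to $\bA=0$, project onto low one-body energies, apply a quantum de Finetti theorem, and use Gagliardo--Nirenberg with \eqref{eq:true stab} to make the limiting Hartree energy nonnegative, with $\beta<1$ tied to kinetic/moment control) matches the paper's strategy in outline, and your observation that no two-body operator bound can work is correct. But there is a genuine gap at the decisive step, the error control. The de Finetti theorem you invoke is the trace-norm (CKMR-type) one, whose error is \emph{polynomial} in the localization dimension, and that is precisely the tool whose limitation confines the earlier work \cite{LewNamRou-15} to $\beta<(s+1)/(s+2)$. Your dimension count is also too optimistic: the localized space is not of dimension $\ell^{-2}$, because under \eqref{eq:trapping} the low-energy states of $h$ at energy $\Lambda\sim\ell^{-2}$ spread over $|x|\lesssim\Lambda^{1/s}$, so $\dim \1_{h\le\Lambda}\sim\Lambda^{1+2/s}$; moreover the trace-norm error $\dim/N$ gets multiplied by the operator norm of the projected two-body Hamiltonian (of order $\Lambda$ plus the sup-norm $\ell^{-2}$ of the coarse-grained interaction). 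Since beating the coarse-graining/localization error forces $\Lambda$ to grow at least like the current a priori bound $N^{\alpha}$ on the moments, the resulting self-consistent iteration stalls at a positive power of $N$ depending on $s$, and no choice of $\ell$ rescues it. In short, ``optimizing $\ell$'' with these tools reproduces the $s$-dependent threshold of \cite{LewNamRou-15}; the assertion that it ``should yield admissibility exactly for $\beta<1$'' is the unproved (and, for finite $s$, false) step.

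What is missing is the paper's key new ingredient: the information-theoretic de Finetti theorem of Brand\~ao--Harrow \cite{BraHar-12,LiSmi-15} (Lemma~\ref{lem:deF}), whose error is $C\sqrt{\log(\dim P)/N}\,\norm{A}\norm{B}$ --- only \emph{logarithmic} in the dimension --- but which only tests the state against products $A\otimes B$ of bounded one-body observables. To exploit it one must first write the interaction as an integral of such products, which the paper does via the Fourier decomposition \eqref{eq:decomp w}, and then control the $k$-integral $\int \norm{P\be_kP}^2|\widehat{w}(N^{-\beta}k)|\,dk\le C+C\Lambda\log N$ using Lemma~\ref{lem:cos} together with $w\in L^1\cap L^2$. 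The resulting de Finetti error, $C\sqrt{\log\Lambda/N}\,\Lambda\log N$, is linear in $\Lambda$ up to logarithms, so the cutoff may be taken polynomially large in $N$ at essentially no cost; the constraint $\beta<1$ then enters only through the second-moment estimate (Lemma~\ref{lem:mom}). Finally, you gloss over the self-consistency problem that forces the paper's bootstrap: the moment bounds involve the unknown energy $e_{N,\eps}$ itself, so one must iterate starting from the trivial bound $|e_{N,\eps}|\le CN^{2\beta}$ and improve the exponent by a fixed amount at each step --- your idea of ``reserving a fraction of $K$'' corresponds to the $\eps$-perturbation in \eqref{eq:ener epsilon}, but without the iteration it does not yield a uniform bound.
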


\begin{proof}[Remarks]\mbox{}\\
\noindent\textbf{1.} The best result preceding the above is that from~\cite{LewNamRou-15} which covers 
$$ \beta < \frac{s+1}{s+2}$$
with $s$ the exponent in~\eqref{eq:trapping}. This had the merit of allowing a dilute gas, $\beta >1/2$, whereas the previous results~\cite{LewNamRou-14c} were limited to $\beta < 1/2$. It is not our aim to pretend that increasing $\beta >0$ is an undertaking that should go on forever, but there are reasons that make us feel the above is noteworthy. First, the annoying dependence on the trapping potential gets dispensed with (provided the growth is still polynomial). Second, the proof is somewhat cleaner. Third, a natural barrier seems to have been reached: the condition $\beta <1$ is that needed to obtain a second-moment estimate following the techniques of~\cite{ErdYau-01,NamRouSei-15,LewNamRou-15}. 

\medskip

\noindent\textbf{2.} It is not obvious to us whether there should exist a physically natural upper bound on $\beta$ in the attractive case. For 3D repulsive gases this would be given by the Gross-Pitaevskii~\cite{LieSeiSolYng-05,Rougerie-EMS} limit $\beta = 1$. For 2D repulsive gases, any $\beta >0$ should be allowed, for the GP limit~\cite{LieSeiYng-01,JebLeoPic-16} corresponds to an interaction scaled exponentially with $N$.

\medskip

\noindent\textbf{3.} The proof proceeds by combining the second moment estimate of~\cite{LewNamRou-15} with  the information-theoretic quantum de Finetti theorem~\cite{BraHar-12,LiSmi-15} of Brand\~ao-Harrow. The interest of the latter for large bosonic systems~\cite{Rougerie-19} is that it (almost) gives a quantitative de Finetti theorem when the one-body state-space is infinite dimensional (more precisely the dimension should be still finite but only its logarithm enters relevant estimates), thus bypassing the main technical limitation of the tools used in~\cite{LewNamRou-15}.

%

\medskip

\noindent\textbf{4.} For the above stability result we may assume $\bA=0$, thanks to the diamagnetic inequality~\cite[Theorem 7.21]{LieLos-01}, as well as ignore the bosonic symmetry (i.e. the lower bound holds true on the full space $L^2(\R^{2N})$). However, the presence of the magnetic field and the Bose-Einstein statistics are meaningful for the Corollary \ref{cor:GS} below. 
\end{proof}

The stability result has two corollaries regarding the large $N$ limit of the system at hand. First for ground states:

\begin{corollary}[\textbf{NLS limit for ground states}]\label{cor:GS}\mbox{}\\
Under the above assumptions, the ground state energy per particle 
$$\frac{E(N)}{N} = \frac{1}{N}\inf \sigma_{\gH_N} (H_N)=\frac{1}{N} \inf \{ \langle \Psi, H_N \Psi\rangle : \|\Psi\|_{\gH_N}=1 \}$$
converges when $N\to \infty$ to the ground state energy of the NLS functional in \eqref{eq:nls}, i.e. 
$$
E^{\rm nls}= \inf\{ \cE^{\rm nls}(u): \|u\|_{L^2(\R^2)}=1\}.
$$
Moreover, the reduced density matrices $\{\gamma_{\Psi _{N}}^{(k)}\}_{N}$ of ground states $\Psi_N$ of $H_N$ converge to convex combinations
of projections on NLS minimizers, namely there exists a Borel probability measure $\mu$ supported on the minimizers of $E^{\rm nls}$ such that, along a subsequence $N\to \infty$, 
\begin{align} \label{eq:thm-cv-DM}
\lim_{N \to \infty}\Tr \left| \gamma_{\Psi _{N}}^{(k)} - \int |u^{\otimes k} \rangle \langle u^{\otimes k}| d\mu(u) \right| =0,\quad \forall k\in \mathbb{N}.
\end{align}
\end{corollary}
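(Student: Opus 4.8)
The plan is to derive matching upper and lower bounds on the ground state energy per particle, and then to read off the structure of the limiting reduced density matrices from a quantum de Finetti theorem; the bosonic symmetry and the full one-body Hamiltonian $h$ (including $\bA$) are kept throughout, as they enter the limiting object. For the \emph{upper bound}, I would test $H_N$ against a factorized trial state $u^{\otimes N}$ with $u$ a minimizer of $\cE^{\rm nls}$. A direct computation gives $N^{-1}\langle u^{\otimes N}, H_N u^{\otimes N}\rangle = \langle u|h|u\rangle + \tfrac12 \iint |u(x)|^2 N^{2\beta} w(N^\beta(x-y)) |u(y)|^2 \,dx\,dy + o(1)$, and since $w\in L^1(\R^2)$ the rescaled potential $N^{2\beta}w(N^\beta\cdot)$ converges weakly to $b\,\delta_0$ with $b=\int w$; smoothness of $u$ then yields $N^{-1}\langle u^{\otimes N}, H_N u^{\otimes N}\rangle \to \cE^{\rm nls}[u]=E^{\rm nls}$, whence $\limsup_N E(N)/N \le E^{\rm nls}$.

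Next I would establish \emph{a priori bounds and compactness}. Combining the stability bound $H_N\ge -CN$ of Theorem~\ref{thm:main} with the energy upper bound, a ground state $\Psi_N$ has kinetic energy per particle and $N^{-1}\sum_j \langle V(x_j)\rangle_{\Psi_N}$ bounded uniformly in $N$. The polynomial growth assumption~\eqref{eq:trapping} then forces tightness of the one-particle density, preventing loss of mass at infinity, so that the reduced density matrices $\{\gamma^{(k)}_{\Psi_N}\}$ converge, along a subsequence, in trace norm to limits $\gamma^{(k)}$ that are themselves consistent bosonic density matrices.

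The heart of the argument is the \emph{lower bound together with the de Finetti structure}. By the quantum de Finetti theorem one represents the limits as $\gamma^{(k)}=\int |u^{\otimes k}\rangle\langle u^{\otimes k}|\,d\mu(u)$ for a Borel probability measure $\mu$ on the unit sphere of $L^2(\R^2)$. The one-body energy $\langle h\rangle$ is weakly lower semicontinuous and passes to the limit correctly, but the interaction is not continuous for the weak topology and, in the dilute regime, concentrates on the short scale $N^{-\beta}$. Following the scheme of~\cite{Rougerie-19}, I would localize the interaction onto a finite-dimensional subspace of one-body modes cut off at energy (or momentum) $\Lambda$, apply the information-theoretic de Finetti theorem of Brand\~ao--Harrow~\cite{BraHar-12,LiSmi-15} on that subspace — whose error depends on the dimension only through its logarithm — and control the discarded high-energy remainder by the second-moment estimate of~\cite{LewNamRou-15}. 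Optimizing $\Lambda=\Lambda(N)$, the two errors balance and vanish precisely when $\beta<1$, yielding $\liminf_N E(N)/N \ge \int \cE^{\rm nls}[u]\,d\mu(u)$.

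To \emph{conclude}, since $\cE^{\rm nls}[u]\ge E^{\rm nls}$ for every normalized $u$ and $\mu$ is a probability measure, the lower bound is $\ge E^{\rm nls}$, which matches the upper bound and gives $E(N)/N\to E^{\rm nls}$. Equality in $\int \cE^{\rm nls}[u]\,d\mu(u)=E^{\rm nls}$ then forces $\mu$ to be supported on the minimizers of $E^{\rm nls}$, and the trace-norm convergence~\eqref{eq:thm-cv-DM} is exactly the de Finetti representation of $\gamma^{(k)}$. I expect the main obstacle to be the energy lower bound for the singular interaction: reconciling the weak convergence supplied by de Finetti with the short-range scaling requires the delicate interplay between the logarithmic dimension dependence of the Brand\~ao--Harrow theorem and the second-moment control, and it is this step — not the compactness or the upper bound — that pins down the threshold $\beta<1$.
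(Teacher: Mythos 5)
Your overall architecture (trial-state upper bound, tightness from \eqref{eq:trapping} plus Theorem \ref{thm:main}, quantitative de Finetti lower bound fed by the moment estimates, identification of the support of the limit measure) is the right one, and it is close to what the paper intends: the paper gives no self-contained proof of Corollary \ref{cor:GS}, but states (Remark 3 after Corollary \ref{cor:dyn}) that once Theorem \ref{thm:main} is available, the proof of \cite[Theorem~1]{LewNamRou-15} applies mutatis mutandis for $\beta<1$, with the magnetic growth condition \eqref{eq:mag} handled as in \cite[Section~4.2]{NamRouSei-15}. Your upper bound and compactness steps match that route. The genuine gap is in your lower bound, where you propose to reuse the Brand\~ao--Harrow theorem (Lemma \ref{lem:deF}) as in the stability proof. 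That lemma produces an $N$-dependent measure $\mu_N^{(2)}$ living on \emph{mixed} one-body states $\gamma$ on $P\gH$, with error controlled only against products of bounded observables; your sketch silently replaces it by the probability measure $\mu$ on \emph{pure} states furnished by the Hudson--Moody/St{\o}rmer representation of the limiting hierarchy, and asserts $\liminf E(N)/N \ge \int \cE^{\rm nls}[u]\,d\mu(u)$. Nothing in Lemma \ref{lem:deF} connects $\mu_N^{(2)}$ to $\mu$. Supplying exactly that link is the role of the constructive, pure-state quantitative de Finetti theorem used in \cite{LewNamRou-15} (trace-norm error, polynomial in $\dim P\gH$ --- affordable here because stability plus Lemma \ref{lem:mom} make the moments $O(1)$, so $\Lambda$ can be taken a small power of $N$), and it cannot be bypassed: arguing that the Brand\~ao--Harrow measure concentrates on pure states would be circular, since purity of the limit is part of what \eqref{eq:thm-cv-DM} asserts.

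The mixed/pure distinction is not cosmetic, because of the magnetic field. The only mechanism the paper has for extracting a bound from a mixed state is the diamagnetic/Hoffmann--Ostenhof reduction $\tr(h\gamma)\ge \int_{\R^2}|\nabla\sqrt{\rho_\gamma}|^2$ (plus the $V$-term, which is linear in $\rho_\gamma$), and this erases $\bA$. That is harmless for stability, where one only needs $\cE^{\rm H}[\gamma]\ge 0$ and may even set $\bA\equiv 0$ from the outset; but for the corollary it bounds $E(N)/N$ from below only by the \emph{non-magnetic} NLS energy, which is in general strictly smaller than $E^{\rm nls}$ --- and the paper stresses (Remark 4 after Theorem \ref{thm:main}) that $\bA$ and the Bose statistics matter precisely for Corollary \ref{cor:GS}. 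When $b=\int w\le 0$ one could pass from mixed to pure states by concavity of $\gamma\mapsto \tr(h\gamma)+\tfrac{b}{2}\int\rho_\gamma^2$ over the convex set of one-body states (the infimum of a concave functional is attained at extreme points, i.e.\ rank-one projections), but the finite-$N$, sign-changing interaction has no such convexity structure, and your sketch offers no substitute. So: keep your upper bound, compactness, and final identification steps, but run the lower bound through the pure-state de Finetti machinery of \cite{LewNamRou-15}, as the paper prescribes, rather than through Lemma \ref{lem:deF}; the Brand\~ao--Harrow theorem is the tool adapted to stability, not to the sharp NLS limit with magnetic field.
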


\medskip

Next, for dynamics:

\begin{corollary}[\textbf{NLS limit for dynamics}]\label{cor:dyn}\mbox{}\\
Under the above assumptions and with $\bA \equiv 0$, the many-body Schr\"odinger dynamics 
$$ i\partial _t \Psi_N(t) = H_N \Psi_N(t)$$
starting from a well-prepared initial datum, e.g. $\Psi_N(0)=u(0)^{\otimes N}$ with $u(0)$ smooth,  converges when $N\to \infty$ to the NLS dynamics
$$ i \partial_t u(t) = h u(t) + b |u(t)|^2 u(t)$$
in the sense of reduced density matrices
\begin{align} \label{eq:thm-cv-DM-dynamics}
\lim_{N \to \infty}\Tr \left| \gamma_{\Psi _{N}(t)}^{(k)} - |u(t)^{\otimes k} \rangle \langle u(t)^{\otimes k}| \right| =0,\quad \forall k\in \mathbb{N}, \quad \forall t\in \R. 
\end{align}
\end{corollary}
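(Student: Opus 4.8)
The plan is to deduce Corollary~\ref{cor:dyn} from Theorem~\ref{thm:main} by feeding the stability bound into an existing derivation scheme for the NLS dynamics. The point is that the standard obstruction preventing mean-field methods from applying to attractive gases is the absence of a global-in-time \emph{a priori} bound on the many-body wavefunction, and stability of the second kind is exactly what supplies it. Concretely, with $\bA\equiv 0$ as in the statement (so that $h=-\Delta+V$), I would run the counting-functional method of Pickl as adapted to sign-indefinite potentials in~\cite{JebPic-17}, or equivalently the BBGKY-hierarchy approach of~\cite{CheHol-15}, after checking that their error estimates use only kinetic-energy control and never the positivity of $w$.

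First I would upgrade stability into a kinetic-energy estimate. Since the hypothesis~\eqref{eq:true stab} is a \emph{strict} inequality $\int_{\R^2}|w_-|<a^*$, the proof of Theorem~\ref{thm:main} leaves room to spare and in fact yields an operator bound $H_N\geq c\sum_{j=1}^N h_j - CN$ for some $c>0$. Because a smooth factorized datum satisfies $\langle\Psi_N(0),H_N\Psi_N(0)\rangle = N\,\cE^{\rm nls}[u(0)]+o(N)$ by~\eqref{eq:nls}, conservation of energy along the Schr\"odinger flow gives $\langle\Psi_N(t),\sum_{j}h_j\,\Psi_N(t)\rangle\leq CN$ uniformly in $t\in\R$. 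This bounded kinetic (and trapping) energy per particle is the single ingredient the derivation scheme is missing in the attractive case; propagation of the higher moments needed to quantify convergence follows from stability in the standard way.

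Next I would record global well-posedness of the limit. The same condition gives $b=\int_{\R^2}w\geq -\int_{\R^2}|w_-|>-a^*$, so~\eqref{eq:stab nls} holds and $\cE^{\rm nls}$ is bounded below under the mass constraint; mass and energy conservation then rule out finite-time blow-up and produce a global smooth solution $u(t)$ for smooth $u(0)$. With the uniform bound of the previous step in hand, the interaction terms at scale $N^\beta$ can be controlled against the kinetic energy by Sobolev and smoothing estimates, closing the Gr\"onwall inequality for the functional measuring the number of excitations out of the condensate $u(t)$. Convergence of the reduced density matrices~\eqref{eq:thm-cv-DM-dynamics} then follows.

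The hard part will be the last step for $\beta$ close to $1$: there the rescaled potential concentrates to a near-delta, so the interaction energy must be absorbed into the kinetic energy very efficiently and the error estimates of the derivation scheme become delicate, matching the very same threshold $\beta<1$ that governs the second-moment estimate behind Theorem~\ref{thm:main}. The crux is to verify that throughout these estimates no positivity of $w$ is ever used, only the uniform kinetic-energy bound together with the strict gap $a^*-\int_{\R^2}|w_-|>0$.
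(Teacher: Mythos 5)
Your proposal is correct and takes essentially the same route as the paper: the paper also deduces the corollary by plugging Theorem~\ref{thm:main} into the method of~\cite{JebPic-17}, which derives the NLS dynamics precisely under the stability bound~\eqref{eq:stability}, thereby extending its validity to all $\beta<1$ (with~\cite{NamNap-17,CheHol-15} cited as alternative/partial routes). Your elaboration of how stability is consumed---the operator bound $H_N\geq \eps\sum_j h_j - C_\eps N$ coming from the bootstrap on $e_{N,\eps}$, energy conservation giving a uniform-in-time kinetic bound, and global well-posedness of the limiting NLS---is consistent with that scheme.
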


\begin{proof}[Remarks]\mbox{}\\
\noindent\textbf{1.} Recall that  the $k$-particles reduced density matrix of any $\Psi \in\gH_N$  is defined as
$$\gamma_{\Psi}^{(k)}:= \Tr_{k+1\to N} |\Psi \rangle \langle \Psi|.$$
Here $\tr_{k+1\to N}$ means the partial trace over $N-k$ factors of $\gH_N = \bigotimes_{\rm sym} ^N L^2(\R^2)$. In particular, for the product state we have $\gamma_{u^{\otimes N}}^{(k)}=|u^{\otimes k}\rangle \langle u^{\otimes k}|$. Thus the convergences \eqref{eq:thm-cv-DM}, \eqref{eq:thm-cv-DM-dynamics} tells us that the corresponding many-body wave functions are close to product states in a weak sense (it is well known that they are not close in $L^2$-norm). 

\medskip

\noindent\textbf{2.} Although the statements cover also repulsive gases ($w\geq 0$), the novelty lies mainly in the attractive case. If $w\geq 0$ the proof is much simpler indeed and one can reach much higher values of $\beta$; see ~\cite{LieSeiYng-01} for ground states and~\cite{JebLeoPic-16} for dynamics.  

\medskip

\noindent\textbf{3.} We do not even sketch the proof of Corollary~\ref{cor:GS}. The reader should have no difficulty in figuring out that, given Theorem~\ref{thm:main}, the proof of~\cite[Theorem~1]{LewNamRou-15} applies mutatis mutandis for $\beta < 1$, at least under the additional condition $|\bA(x)|^2 \le V(x)$. The latter technical condition can be replaced by~\eqref{eq:mag} following arguments from \cite[Section 4.2]{NamRouSei-15}. 

\medskip

\noindent\textbf{4.} For the dynamical statement, the method of~\cite{JebPic-17} allows to derive NLS dynamics provided~\eqref{eq:stability} holds. Hence, Theorem~\ref{thm:main} extends the range of validity of their main result to all $\beta < 1$. Such a range was obtained previously in~\cite{NamNap-17} (without using~\eqref{eq:stability}), and a more restricted one in~\cite{CheHol-15} (in the special case $s=2$).  We also refer to these references for precise descriptions of the well-prepared initial data that can be covered. 

%
%
\end{proof}

\begin{remark}[Classically stable case in 3D]\mbox{}\\
Stability of the second kind is also an issue for 3D Bose gases with potentials 
$$ N^{3\beta - 1} w(N^{\beta} x)$$
having an attractive part, in the dilute regime $\beta > 1/3$. Because of the respective scalings of the mean-field interaction and kinetic energy one must then assume~\cite{LewNamRou-14c,Triay-17} that the unscaled potential $w$ is classically stable 
$$ \sum_{1\leq i < j \leq N} w (x_i - x_j) \geq - C N$$
for all $x_1,\ldots,x_N \in \R ^d$. This holds for exampe if $\hat{w} \geq 0$. 

Our method also covers this case, but we refrain from stating details, for the range of $\beta$ we are allowed to reach is $1/3 < \beta < 9/26$. Triay, in his study of the dipolar Bose gas~\cite{Triay-17}, has already obtained $1/3 < \beta < 1/3 + s /(45+42s)$ where $s$ is the exponent in~\eqref{eq:trapping}. Only for rather small values of $s$ does our estimate improve on his. The 3D case for a restricted class of potentials with an attractive part has also been considered~\cite{Lee-09,Yin-10b} for homogeneous gases in the thermodynamic limit. \hfill$\diamond$
\end{remark}

\section{Proof} 

This being for a large part an improvement on~\cite{LewNamRou-15,Rougerie-19}, we shall be brief. See~\cite{BenPorSch-15,Golse-13,LieSeiSolYng-05,Rougerie-EMS,Rougerie-LMU,Rougerie-spartacus,Schlein-08} for general background on large $N$ limits of bosonic quantum systems.

Recall that we write the proof for $\bA \equiv 0$, which implies the general statement using the diamagnetic inequality~\cite[Theorem 7.21]{LieLos-01}.

%

\subsection{Old arguments} Consider the spectral projectors
\begin{equation}\label{eq:spec}
 P = \1_{h\leq \Lambda}, \quad Q = \1 - P
\end{equation}
with $\Lambda > 0$ a (large enough) one-body energy cut-off. Let 
$$ H_2 := h_1 + h_2 + N^{2\beta} w(N^{\beta} (x_1-x_2)) $$
be the two-body Hamiltonian associated with~\eqref{eq:hamil}. Let $\Psi_N$ be a ground state of $H_N$ and let $\gamma_{N}^{(k)}$ be its $k$-body density matrix. Then
\begin{align} \label{eq:EN-gammaN} \frac{1}{N}E(N) = \frac{1}{N} \langle \Psi_N |H_N| \Psi_N\rangle = \frac{1}{2}\tr \left( H_2 \gamma_N ^{(2)}\right).
\end{align}

We start with a localization lemma, essentially a restatement of~\cite[Equation (46)]{LewNamRou-15}:

\begin{lemma}[\textbf{Localization}]\label{lem:loc}\mbox{}\\
With the above notation, for any $\delta > 1/2$ there exists a $C_\delta >0$ such that  
$$
\tr \left( \left( H_2 - P^{\otimes 2} H_2 P^{\otimes 2} \right) \gamma_{\Psi_N} ^{(2)} \right) \geq - C_{\delta} \Lambda ^{(\delta -1)/2} \left( \tr \left(h \,\gamma_{\Psi_N} ^{(1)}\right) \right) ^{(1-\delta)/2}  \left( \tr \left(h\otimes h \,\gamma_{\Psi_N} ^{(2)}\right) \right) ^{\delta}. 
$$
\end{lemma}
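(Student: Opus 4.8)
The plan is to write $H_2=(h_1+h_2)+W$ with $W:=N^{2\beta}w(N^\beta(x_1-x_2))$ and to treat the kinetic and interaction parts of $H_2-P^{\otimes2}H_2P^{\otimes2}$ separately. Since $P$ and $Q$ are spectral projections of $h$ they commute with $h$, and (after harmlessly shifting $V$ by a constant so that $h\ge 1$, which only changes $H_N$ by a multiple of $N$) a one-line computation gives
$$ h_1-P^{\otimes2}h_1P^{\otimes2}=h\otimes Q+(hQ)\otimes P\ge 0, $$
and symmetrically for $h_2$. Hence the kinetic part of $H_2-P^{\otimes2}H_2P^{\otimes2}$ is a nonnegative operator; pairing it with $\gamma^{(2)}_{\Psi_N}\ge0$ gives a nonnegative number that may simply be discarded in the lower bound. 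This reduces the claim to an upper bound on $\bigl|\tr\bigl((W-P^{\otimes2}WP^{\otimes2})\gamma^{(2)}_{\Psi_N}\bigr)\bigr|$.

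I would then expand $W-P^{\otimes2}WP^{\otimes2}=(\1-P^{\otimes2})W+P^{\otimes2}W(\1-P^{\otimes2})$ and use $\1-P^{\otimes2}=P\otimes Q+Q\otimes P+Q\otimes Q$, so that every resulting term carries at least one factor $Q=\1_{h>\Lambda}$ on one side of $W$. Because these terms come in adjoint pairs and $\gamma^{(2)}_{\Psi_N}$ is self-adjoint, it is enough to bound one representative of each type, which I would do by Cauchy--Schwarz for the trace: one factor collects $W$ together with the $Q$-smallness, the other collects the kinetic weights of $\gamma^{(2)}_{\Psi_N}$. The projector $Q$ is used only through $\|h^{-a}Q\|\le\Lambda^{-a}$, which is the sole source of the negative power of $\Lambda$ in the final bound.

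The heart of the matter is to bound the interaction operator by the kinetic energy with a constant independent of $N$. This is possible because $W$ concentrates at the scale $N^{-\beta}$, which is matched exactly by one power of the (relative) kinetic energy under dilation: spending a single factor $(h_1+h_2+1)^{-1}$ on $W$ cancels all powers of $N^\beta$, and the resulting form-bound constant is controlled by the hypothesis $w\in L^1(\R^2)\cap L^2(\R^2)$ --- the $L^1$ norm being the scaling-critical quantity governing the long-range part, and the $L^2$ norm controlling the logarithmic singularity of the two-dimensional resolvent. Having spent one power of $h$ on $W$, I would distribute the remaining fractional powers of $h$ between the one-body marginal $\gamma^{(1)}_{\Psi_N}=\tr_2\gamma^{(2)}_{\Psi_N}$ and the full two-body state, and optimize how much weight is retained; this interpolation produces the exponents $(1-\delta)/2$ on $\tr(h\,\gamma^{(1)}_{\Psi_N})$, $\delta$ on $\tr(h\otimes h\,\gamma^{(2)}_{\Psi_N})$, and $(\delta-1)/2$ on $\Lambda$, with the constraint $\delta>1/2$ reflecting the amount of kinetic weight that must be reserved to dominate $W$.

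I expect the main obstacle to be exactly this step: obtaining the $N$-uniform relative bound on the singular, concentrating interaction while simultaneously tracking fractional powers of $h$ across the two tensor factors, so that precisely the combination $(\tr h\,\gamma^{(1)}_{\Psi_N})^{(1-\delta)/2}(\tr h\otimes h\,\gamma^{(2)}_{\Psi_N})^{\delta}$ emerges together with the correct power of $\Lambda$. The two-dimensional logarithmic borderline is what forces the $L^2$ hypothesis in addition to $L^1$, and keeping every constant free of $N$ while extracting the $Q$-gain through $\Lambda$ is the delicate bookkeeping on which the estimate rests.
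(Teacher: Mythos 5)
Your kinetic computation is correct and coincides with the argument behind the paper's own treatment (note the paper gives no proof here: it refers to \cite[Equation (46)]{LewNamRou-15}): since $P,Q$ are spectral projections of $h$, after shifting so that $h\ge 0$ one has $h\otimes \1 - PhP\otimes P = PhP\otimes Q + QhQ\otimes \1\ge 0$, so the kinetic part of $H_2-P^{\otimes 2}H_2P^{\otimes 2}$ may be dropped. Your skeleton for the interaction part --- expand $\1^{\otimes 2}=P^{\otimes 2}+R$ with $R=P\otimes Q+Q\otimes P+Q\otimes Q$, Cauchy--Schwarz in the trace, extract negative powers of $\Lambda$ from $Q$, then interpolate to produce the exponents --- is also the route of the cited proof.

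The gap is precisely the step you call the heart of the matter. The claim that ``spending a single factor $(h_1+h_2+1)^{-1}$ on $W$ cancels all powers of $N^\beta$'' with a constant controlled by $\|w\|_{L^1\cap L^2}$ is false in two dimensions. By scaling in the relative coordinate ($f(r)=N^\beta g(N^\beta r)$), an $N$-uniform bound $\pm N^{2\beta}w(N^\beta r)\le C(1-\Delta_r)$ would force $\int |w|\,|g|^2\le C\int |\nabla g|^2$ for all $g$, i.e.\ form-boundedness of $|w|$ relative to the \emph{homogeneous} Laplacian; this fails for every $w\not\equiv 0$ by the logarithmic-capacity phenomenon (with $g_R$ the standard logarithmic cutoff, $\int|\nabla g_R|^2\sim (\log R)^{-1}\to 0$ while $\int |w|\,|g_R|^2$ stays bounded below). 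Quantitatively, with exactly one power of the two-body kinetic energy the best available constant is of order $\|N^{2\beta}w(N^\beta\cdot)\|_{L^2}=N^\beta\|w\|_{L^2}$, not $O(1)$, and no combination of the $L^1$ and $L^2$ norms repairs this borderline. The ingredient that actually powers \cite[Equation (46)]{LewNamRou-15} is the strictly supercritical, tensorized inequality
$$
\pm\, N^{2\beta}w\left(N^\beta(x_1-x_2)\right)\;\le\; C_\delta \|w\|_{L^1}\,(1-\Delta_{x_1})^{\delta}(1-\Delta_{x_2})^{\delta},\qquad \delta>1/2,
$$
a trace-on-the-diagonal estimate proved in Fourier variables, where $\delta>1/2$ is exactly the condition making $\sup_{\xi}\int_{\R^2}(1+|p|^2)^{-\delta}(1+|\xi-p|^2)^{-\delta}\,dp$ finite. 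Its constant involves only the scaling-invariant $L^1$ norm, and its mixed structure --- a $\delta$-power on \emph{each} particle, total $2\delta>1$ --- is what can be combined with the spectral gain $Q(h+C)^{\delta}\le \Lambda^{\delta-1}Q(h+C)$ and with H\"older for traces against the moments $\tr (h\,\gamma_{\Psi_N}^{(1)})$ and $\tr(h\otimes h\,\gamma_{\Psi_N}^{(2)})$ to yield exactly the stated bound. Your write-up is also internally inconsistent on this point (you first assert that one power suffices, then that $\delta>1/2$ per factor must be reserved; the latter is the correct statement), and your appeal to $w\in L^2$ is misplaced: this lemma uses only $w\in L^1$, while the $L^2$ hypothesis enters the Moments lemma, as the paper itself remarks.
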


To put the above to good use we need a priori bounds on the first and second moments of the one-body Hamiltonian. The following is~\cite[Lemma~5]{LewNamRou-15}, and this is where we use our main assumption $\beta < 1$ and also the requirement $w\in L^2 (\R^2)$. 

\begin{lemma}[\textbf{Moments}]\label{lem:mom}\mbox{}\\
Let $0<\beta <1$. For all $\eps\in (0,1)$ we have 
\begin{equation}\label{eq:one-two body bound}
\Tr\left(h \,\gamma_{\Psi_N}^{(1)}\right) \leq C\frac{1 + |e_{N,\eps}|}{\eps}\quad \text{and} \quad  \Tr \left(h\otimes h\, \gamma_{\Psi_N}^{(2)}\right) \leq C \left( \frac{1 + |e_{N,\eps}|}{\eps} \right) ^2
\end{equation}
where 
\begin{equation}\label{eq:ener epsilon}
e_{N,\eps}:= N^{-1} \inf_{\Psi \in \gH_N, \|\Psi\|=1}  \left\langle \Psi \left|  H_N - \eps \sum_{j=1}^N h_j  \right| \Psi \right\rangle. 
\end{equation}
\end{lemma}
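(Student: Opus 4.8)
The plan is to treat $e_{N,\eps}$ as an a priori (possibly $N$-dependent) quantity and bound the moments of $h$ against it, the point being that for every fixed $N$ the regularized operator $H_N-\eps\sum_jh_j$ is bounded below by $Ne_{N,\eps}$. Set $w_N(x):=N^{2\beta}w(N^\beta x)$, $\cH_0:=\sum_{j=1}^N h_j$ and $\cW:=\frac1{N-1}\sum_{i<j}w_N(x_i-x_j)$, so that $H_N=\cH_0+\cW$, and write $\langle X\rangle:=\langle\Psi_N,X\Psi_N\rangle$. Shifting $V$ by a constant (which alters $H_N$, $E(N)$ and $e_{N,\eps}$ only by controlled additive constants) we may assume $h\ge1$, so that $\langle\cH_0\rangle\ge0$; by permutation symmetry $\langle\cH_0\rangle=N\Tr(h\,\gamma^{(1)}_{\Psi_N})$ and $\langle\cH_0^2\rangle=N\langle h_1^2\rangle+N(N-1)\Tr(h\otimes h\,\gamma^{(2)}_{\Psi_N})$, so both claims reduce to estimating $\langle\cH_0\rangle$ and $\langle\cH_0^2\rangle$. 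Besides the lower bound $H_N-\eps\cH_0\ge Ne_{N,\eps}$ I will use the cheap variational upper bound $E(N)\le CN$, obtained by testing $H_N$ on a fixed smooth product state and using $w\in L^1$. The first moment is then immediate: $\eps\langle\cH_0\rangle=\langle H_N\rangle-\langle H_N-\eps\cH_0\rangle\le E(N)-Ne_{N,\eps}\le CN(1+|e_{N,\eps}|)$, and dividing by $\eps N$ gives the first bound.

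For the second moment, discarding the nonnegative diagonal contribution gives $\Tr(h\otimes h\,\gamma^{(2)}_{\Psi_N})\le\frac1{N(N-1)}\langle\cH_0^2\rangle$, so it suffices to show $\langle\cH_0^2\rangle\le CN^2((1+|e_{N,\eps}|)/\eps)^2$. The key idea is to use the eigenvalue equation $H_N\Psi_N=E(N)\Psi_N$: since $\cH_0\Psi_N=(E(N)-\cW)\Psi_N$, a one-line computation yields the variance identity
\[
\langle\cH_0^2\rangle=\langle\cH_0\rangle^2+\big(\langle\cW^2\rangle-\langle\cW\rangle^2\big)=\langle\cH_0\rangle^2+\Var_{\Psi_N}(\cW).
\]
The first term is harmless, being the square of a first moment: $\langle\cH_0\rangle^2=\big(N\Tr(h\,\gamma^{(1)}_{\Psi_N})\big)^2\le CN^2((1+|e_{N,\eps}|)/\eps)^2$. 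Everything thus reduces to the fluctuation bound $\Var_{\Psi_N}(\cW)\le CN^2((1+|e_{N,\eps}|)/\eps)^2$, and this is the step where $w\in L^2$ and $\beta<1$ are decisive; I expect it to be the main obstacle.

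To estimate $\Var_{\Psi_N}(\cW)$ I would expand $\cW-\langle\cW\rangle=\frac1{N-1}\sum_{i<j}\big(w_N(x_i-x_j)-\langle w_N(x_i-x_j)\rangle\big)$ and group the pair-pair terms according to the cardinality of $\{i,j\}\cap\{k,l\}$. The coincident terms ($O(N^2)$ of them) are controlled, in the relative coordinate, by the 2D form bound $w_N^2\le C\|w_N^2\|_{L^q}(1-\Delta)$ ($q>1$) together with $-\Delta\lesssim h$; since $\|w_N^2\|_{L^q}=N^{4\beta-2\beta/q}\|w\|_{L^{2q}}^2$, letting $q\to1^+$ (whence $w\in L^2$) they contribute $\lesssim N^{2\beta}(1+\langle h_1\rangle)\lesssim N^{2\beta}(1+|e_{N,\eps}|)/\eps$, which is $\le$ the target precisely because $\beta<1$. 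The single-overlap terms ($O(N^3)$ of them) are handled by $|w_N(x_i-x_j)|\le C\|w_N\|_{L^q}(1+h_j)$ with $\|w_N\|_{L^q}=O(1)$ as $q\to1^+$, giving a contribution $\lesssim N^{-1}\langle\cH_0^2\rangle$ plus first-moment terms, the former being absorbed into the left-hand side. The genuinely delicate contribution is that of the disjoint terms, for which subtracting $\langle\cW\rangle^2$ is essential: these are the connected four-point correlations that make up the true fluctuation of the extensive quantity $\cW$, and bounding them by $CN^2((1+|e_{N,\eps}|)/\eps)^2$ is exactly the content of the second-moment estimates of \cite{ErdYau-01,NamRouSei-15,LewNamRou-15}. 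Granting this and collecting the absorbable $o(1)\langle\cH_0^2\rangle$ terms on the left then yields the desired bound.
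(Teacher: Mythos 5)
First, a point of comparison: the paper offers no proof of Lemma~\ref{lem:mom} at all; it is imported verbatim as \cite[Lemma~5]{LewNamRou-15}, so your attempt has to be measured against that proof, which follows the second-moment technique of \cite{ErdYau-01,NamRouSei-15}. Your first-moment argument is correct and is indeed the standard one, and the variance identity $\langle\cH_0^2\rangle=\langle\cH_0\rangle^2+\Var_{\Psi_N}(\cW)$ for an exact eigenstate is a correct observation. The fatal problem is what you do with it. By that very identity, $\Var_{\Psi_N}(\cW)=\Var_{\Psi_N}(\cH_0)$, so the ``disjoint'' (connected four-point) contribution you isolate is not an auxiliary fact you can outsource: it is equivalent in difficulty to the second-moment bound you are proving, and the references you invoke for it are precisely the papers whose result this lemma \emph{is} --- your argument is circular at its only hard step. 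Worse, those papers do not prove any bound on connected correlations of ground states (no such tool is available here); their proofs are organized exactly so that an upper bound on $\langle\cW^2\rangle$ is \emph{never needed}. Schematically, setting $\tilde H:=H_N-\eps\cH_0-Ne_{N,\eps}\ge 0$ (nonnegative by the definition \eqref{eq:ener epsilon}), the eigenvalue equation gives
\begin{equation*}
\left(E(N)-Ne_{N,\eps}\right)^2=\eps^2\left\langle\cH_0^2\right\rangle+\eps\left\langle\cH_0\tilde H+\tilde H\cH_0\right\rangle+\left\langle\tilde H^2\right\rangle ,
\end{equation*}
and the dangerous four-point terms sit inside $\langle\tilde H^2\rangle\ge0$, which is simply discarded; what remains is a \emph{lower} bound on the cross term, obtained from $\cH_0\tilde H+\tilde H\cH_0=2\cH_0^{1/2}\tilde H\cH_0^{1/2}+[\cH_0^{1/2},[\cH_0^{1/2},\cW]]$ together with double-commutator/Cauchy--Schwarz estimates. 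It is in those estimates --- not in any correlation bound --- that $w\in L^2$ and $\beta<1$ are consumed.

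Second, the form bounds you rely on are not available under hypothesis \eqref{eq:asum w}. In 2D the relative-coordinate bound $U\le C_q\|U\|_{L^q}(1-\Delta)$ holds only for $q>1$ (it fails at $q=1$ because $H^1(\R^2)\not\subset L^\infty$, and $C_q\to\infty$ as $q\to1^+$), so you cannot ``let $q\to1^+$''. For any fixed $q>1$ your coincident-term bound needs $w\in L^{2q}$, which $w\in L^1\cap L^2$ does not provide, and the claim $\|w_N\|_{L^q}=O(1)$ is false: $\|w_N\|_{L^q}=N^{2\beta(1-1/q)}\|w\|_{L^q}$ grows with $N$ for every fixed $q>1$. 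Your single-overlap step also multiplies two form bounds, which is not a legitimate operation: $|w_{12}|\le A_{12}$ and $|w_{13}|\le A_{13}$ do not imply $|w_{12}w_{13}|\le A_{12}A_{13}$ when the bounding operators fail to commute with the other factor. The tool the actual proof uses is the genuinely two-body bound $\pm U(x-y)\le C\|U\|_{L^1}(1-\Delta_x)(1-\Delta_y)$, valid in 2D for $U\in L^1$; applied to $U=w_N^2$ --- this is exactly where $w\in L^2$ enters, via $\|w_N^2\|_{L^1}=N^{2\beta}\|w\|_{L^2}^2$ --- it yields $w_{12}^2\le CN^{2\beta}h_1h_2$, and the comparison of this single factor $N^{2\beta}$ with the $N\cdot(N-1)$ coupling constants is what produces the threshold $\beta<1$. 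In short: keep your first paragraph, but the second moment requires the positivity-plus-commutator mechanism above; the route through an upper bound on the interaction variance is precisely the route the known proofs are designed to avoid.
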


\subsection{New argument}

We rely on a version of the quantum de Finetti theorem from~\cite{BraHar-12,LiSmi-15}:

\begin{lemma}[\textbf{de Finetti}]\label{lem:deF}\mbox{}\\
Let $\gH$ be a complex separable Hilbert space, and $\gH_N = \gH ^{\otimes_{\rm sym} N}$ the corresponding bosonic space. Let $\gamma_N ^{(2)}$ be the $2$-body reduced density matrix of a $N$-body state vector $\Psi_N \in \gH_N$ (or a general mixed state). 

Let $P$ be a finite dimensional orthogonal projector. There exists a Borel measure $\mu_N ^{(2)}$ with total mass $\leq 1$ on the set of one-body mixed states
\begin{equation}\label{eq:one body states}
\cS_P := \left\{ \gamma \mbox{ positive trace-class operator on } P \gH, \, \tr \gamma = 1 \right\} 
\end{equation}
such that 
\begin{equation}\label{eq:deF rep}
\tr\left| A \otimes B \left(  P ^{\otimes 2} \gamma_N ^{(2)} P ^{\otimes 2} - \int \gamma ^{\otimes 2} d\mu_N ^{(2)} (\gamma) \right)\right| \leq C \sqrt{\frac{\log (\dim (P))}{N}} \norm{A}\norm{B}
\end{equation}
for all $A,B$ self-adjoint operators on $P\gH$. The norm in the right-hand side is the operator norm.
\end{lemma}

\begin{proof}
The proof of~\cite{BraHar-12} gives the statement with $A,B$ replaced by quantum measurements. In~\cite[Proposition~3.2]{Rougerie-19} it is explained how the statement with $A,B$ positive operators follows. The full result is obtained by decomposing self-adjoint operators in the manner
$$ A = \1_{A < 0} A + \1_{A \geq 0} A$$
as used already in~\cite{Girardot-19}.
\end{proof}

To apply the above we shall, as in~\cite{Rougerie-19}, decompose the interaction operator using the Fourier transform in the manner
\begin{align}\label{eq:decomp w}
 N^{2\beta} w (N^\beta(x-y)) &= \int_{\R^2}  N^{2\beta} \widehat{ w (N^\beta\cdot )} (k) e^{i k\cdot x} e^{-i k\cdot y}dk \nonumber\\
 &= \int_{\R^2}  \widehat{w}(N^{-\beta}k) \left( \cos \left( k\cdot x \right) \cos \left( k \cdot y \right) + \sin \left( k\cdot x \right) \sin \left( k \cdot y \right) \right) dk
\end{align}
and apply Lemma~\ref{lem:deF} for each $k$. We interject a simple control of the involved multiplication operators:

\begin{lemma}[\textbf{Multiplication by plane waves}]\label{lem:cos}\mbox{}\\
Let $k\in \R^2, k\neq 0$ and $\be_k$ be the multiplication operator on $L^2 (\R^2)$ by either $\cos (k \cdot x)$ or $\sin (k \cdot x)$. Let $P$ be the spectral projector in~\eqref{eq:spec}. As operators
\begin{equation}\label{eq:cos}
 \pm P \be_k P \leq \min\left\{ 1, C \frac{\Lambda^{1/2}}{|k|} \right\}. 
\end{equation}
\end{lemma}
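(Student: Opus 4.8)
The plan is to prove two separate operator-norm bounds on the self-adjoint operator $P\be_k P$ and then take their minimum. Since $P\be_k P$ is self-adjoint (both $P$ and the real multiplication operator $\be_k$ are), we have $\pm P\be_k P \le \norm{P\be_k P}\,\1$, so it suffices to bound $\norm{P\be_k P}$. The first, trivial bound is $\norm{P\be_k P}\le\norm{\be_k}\le 1$, which holds because $\be_k$ is multiplication by a function bounded pointwise by $1$ (either $\cos(k\cdot x)$ or $\sin(k\cdot x)$) and $P$ is an orthogonal projection. This accounts for the $1$ in the minimum.

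The nontrivial bound $\norm{P\be_k P}\le C\Lambda^{1/2}/|k|$ exploits the oscillation of the plane wave against the regularity enforced by $P$. Writing $\hat k = k/|k|$ and using $\hat k\cdot\nabla\sin(k\cdot x)=|k|\cos(k\cdot x)$ (and similarly for the sine, up to sign), I would integrate by parts in the quadratic form: for $\psi,\phi$ in the range of $P$,
\[
\langle\psi,\cos(k\cdot x)\phi\rangle = -\frac{1}{|k|}\int_{\R^2}\sin(k\cdot x)\Big((\hat k\cdot\nabla\bar\psi)\,\phi+\bar\psi\,(\hat k\cdot\nabla\phi)\Big)\,dx.
\]
Since $|\sin|\le 1$ and $|\hat k\cdot\nabla f|\le|\nabla f|$ pointwise, Cauchy--Schwarz yields $|\langle\psi,\be_k\phi\rangle|\le |k|^{-1}\big(\norm{\nabla\psi}\,\norm{\phi}+\norm{\psi}\,\norm{\nabla\phi}\big)$ for $\psi,\phi\in P\gH$.

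It then remains to control $\norm{\nabla\psi}$ for $\psi$ in the range of $P$. Here I use $h\le\Lambda$ on $P\gH$ together with $V\ge -c$ (a consequence of \eqref{eq:trapping}) and $\bA\equiv 0$: since $-\Delta = h-V\le h+c$, one gets $\norm{\nabla\psi}^2=\langle\psi,-\Delta\psi\rangle\le(\Lambda+c)\norm{\psi}^2\le C\Lambda\norm{\psi}^2$ for $\Lambda$ large. Inserting this into the previous estimate (with $\psi,\phi$ replaced by $P\psi,P\phi$) gives $|\langle P\psi,\be_k P\phi\rangle|\le C\Lambda^{1/2}|k|^{-1}\norm{\psi}\norm{\phi}$, hence $\norm{P\be_k P}\le C\Lambda^{1/2}/|k|$. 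Combining with the trivial bound and self-adjointness yields \eqref{eq:cos}.

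The only point requiring care, the ``hard part'' though it is a mild one, is justifying the integration by parts with no boundary contribution and ensuring the constant $C$ is independent of $k$. Both are fine: elements of $P\gH=\mathrm{Ran}\,\1_{h\le\Lambda}$ lie in the domain of every power of $h$ and hence decay rapidly (the confining potential \eqref{eq:trapping} forces Schwartz-type decay), so no boundary terms arise; and the pointwise inequality $|\hat k\cdot\nabla f|\le|\nabla f|$ is uniform in the direction $\hat k$, so the final constant depends only on the fixed data through the gradient estimate, not on $k$.
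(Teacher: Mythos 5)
Your proof is correct and takes essentially the same route as the paper's: the paper likewise integrates by parts against $\sin(k\cdot x)/|k|$ (there on the diagonal quadratic form, putting the derivative on $|f|^2$), applies Cauchy--Schwarz to get $|\langle f|\be_k|f\rangle|\le \frac{2}{|k|}\|f\|\,\|\nabla f\|$, and then sets $f=Pg$, using implicitly the bound $\|\nabla Pg\|^2\le \langle Pg,(h+c)Pg\rangle\le C\Lambda\|Pg\|^2$ (valid since $\bA=0$ and $V\ge -c$), which you spell out explicitly. The only cosmetic differences are that you bound the off-diagonal bilinear form rather than the quadratic form and that you justify the integration by parts via decay of elements of $P\gH$, where a simple density argument in $H^1$ suffices.
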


\begin{proof} The first upper bound $1$ is obvious as $|\be_k|\le 1$. We write the proof of the second bound for $\be_k$ the multiplication by $\cos(k\cdot x)$ (the case of $\sin(k\cdot x)$ is similar). For any smooth compactly supported function $f$, integrating by parts, 
$$ \langle f | \be_k  | f \rangle = - \int_{\R^2} \frac{\sin (k\cdot x)}{|k|} \frac{k}{|k|} \cdot \nabla \left(|f| ^2\right),$$
whence 
\begin{align*}
 \left| \langle f | \be_k  | f \rangle \right| \leq \frac{2}{|k|} \int_{\R^2} |f| |\nabla f| \le \frac{2}{|k|} \left( \int_{\R^2} |f|^2 \right)^{1/2} \left( \int_{\R^2} |\nabla f|^2 \right)^{1/2}. 
\end{align*}
Applying the above with $f = P g$, $g\in L^2 (\R^d)$ proves the claim (recall we work without a magnetic field). 
\end{proof}

We can finally give the 

\begin{proof}[Proof of Theorem~\ref{thm:main}] 
According to Lemma~\ref{lem:loc} and Lemma~\ref{lem:mom}   we have  
\begin{align}\label{eq:pre pouet}
 \tr \left( H_2 \gamma_N ^{(2)}\right) &\geq \tr \left( P^{\otimes 2} H_2 P^{\otimes 2}\gamma_N ^{(2)}\right) \nn\\ &\qquad - C_{\delta} \Lambda ^{(\delta -1)/2} \left( \tr \left(h \gamma_N ^{(1)}\right) \right) ^{(1-\delta)/2}  \left( \tr \left(h\otimes h \gamma_N ^{(2)}\right) \right) ^{\delta} \nn\\
 &\ge \tr \left( P^{\otimes 2} H_2 P^{\otimes 2}\gamma_N ^{(2)}\right) - C_{\delta} \Lambda ^{(\delta -1)/2} \left( \frac{1 + |e_{N,\eps}|}{\eps}\right) ^{(1+3\delta) /2}
\end{align}
for any $1/2<\delta \le 1$ and $0<\eps<1$. On the other hand we may use Lemma~\ref{lem:deF}: denoting 
$$ \widetilde{\gamma_N} ^{(2)} := \int \gamma^{\otimes 2} d\mu_N ^{(2)} (\gamma)$$
we have 
\begin{multline}\label{eq:pouet}
 \tr \left( P^{\otimes 2} H_2 P^{\otimes 2}\gamma_N ^{(2)}\right) \geq \tr \left( P^{\otimes 2} H_2 P^{\otimes 2}\widetilde{\gamma_N} ^{(2)}\right) \\- C \sqrt{\frac{\log (\Lambda)}{N}} \left( \Lambda + \sum_{\be_k\in \{\cos(k\cdot x), \sin (k\cdot x)\}} \int_{\R^2} \norm{P \be_k P} ^2 |\widehat{w}(N^{-\beta} k)| dk \right).
\end{multline}
Here we have decomposed the interaction term as in~\eqref{eq:decomp w}, used the triangle inequality with~\eqref{eq:deF rep} and recalled that $\dim (P)$ depends at worst polynomially on $\Lambda$ as recalled e.g. in~\cite[Lemma~3.3]{LewNamRou-14c} (this is the place where Condition \eqref{eq:trapping} is used). The error term also contains the operator norm $2\Lambda$ of $P^{\otimes 2} (h_1 + h_2) P^{\otimes 2}$.

The second term on the right side of \eqref{eq:pouet} can be estimated using Lemma~\ref{lem:cos} and Condition ~\eqref{eq:asum w} (the latter ensures that $\hat w\in L^2\cap L^\infty$):
\begin{align*}
 & \int_{\R^2} \norm{P \be_k P} ^2 |\widehat{w}(N^{-\beta} k)| dk  \le  \int_{\R^2} \min\{1, C\Lambda|k|^{-2}\} |\widehat{w}(N^{-\beta} k)| dk \\
 & \le \int_{|k|\le 1} \|\hat w\|_{L^\infty} dk + C\int_{1<|k|\le N^{\beta}} \Lambda |k|^{-2} \|\hat w\|_{L^\infty} dk + C\int_{|k|>N^{\beta}}  \Lambda |k|^{-2} |\widehat{w}(N^{-\beta} k)| dk\\
 & \le C +  C \Lambda \log N + C \Lambda. 
\end{align*}

As regards the first term on the right-hand side of \eqref{eq:pouet} we write 
$$ \tr \left( P^{\otimes 2} H_2 P^{\otimes 2}\widetilde{\gamma_N} ^{(2)}\right) = \int \cE^{\rm H}[\gamma] d\mu_N^{(2)}(\gamma)$$
where 
$$ \cE^{\rm H}[\gamma] =  \tr \left(h\gamma\right) +\frac{1}{2} \iint_{\R^2 \times \R^2} \rho_{\gamma} (x) N^{2\beta} w(N^{\beta}( (x-y)) \rho_{\gamma} (y) dxdy.$$
Here the density $\rho_\gamma(x)=\gamma(x,x)$ (defined properly by the spectral decomposition) satisfies 
$$\int_{\R^2}\rho_\gamma  = \Tr \gamma =1.$$  
Using the diamagnetic inequality~\cite[Theorem~7.21]{LieLos-01} and the convexity of the kinetic energy~\cite[Theorem~7.8]{LieLos-01} we have  the Hoffmann-Ostenhof-type  inequality  
$$ \tr\left(h\gamma\right) \geq \int_{\R^2} |\nabla \sqrt{\rho_\gamma}|^2.$$
Then inserting the Cauchy-Schwarz inequality in the interaction term, 
$$
N^{2\beta} w(N^{\beta}( (x-y)) \rho_\gamma(x) \rho_{\gamma} (y) dxdy \ge - N^{2\beta} |w_-(N^{\beta}( (x-y))| \frac{\rho_\gamma(x)^2+\rho_\gamma(y)^2}{2},
$$
and combining with the Gagiardo-Nirenberg inequality \eqref{eq:GN-inequality} and Assumption~\eqref{eq:true stab}, we get
\begin{equation} \label{eq:eH>0}
\cE^{\rm H}[\gamma] \ge \int_{\R^2} |\nabla \sqrt{\rho_\gamma}|^2 - \frac{1}{2} \left(\int |w_-| \right) \int_{\R^2} \rho_\gamma^2  \ge 0. 
\end{equation}
Thus the first term on the right side of \eqref{eq:pouet} is nonnegative. Therefore,  \eqref{eq:pre pouet} reduces to 
\begin{align*} 
e_N =  \tr \left( H_2 \gamma_N ^{(2)}\right)  \ge -   C \sqrt{\frac{\log (\Lambda)}{N}} \Lambda \log N   -  C_{\delta} \Lambda ^{(\delta -1)/2} \left( \frac{1 + |e_{N,\eps}|}{\eps}\right) ^{(1+3\delta) /2}.
\end{align*}
Moreover by a simple trial state argument we know that $e_N\le C$. In summary, we have
\begin{align}\label{eq:post pouet}
|e_N| \le C + C \sqrt{\frac{\log (\Lambda)}{N}} \Lambda \log N   +  C_{\delta} \Lambda ^{(\delta -1)/2} \left( \frac{1 + |e_{N,\eps}|}{\eps}\right) ^{(1+3\delta) /2}
\end{align}
for all $1/2<\delta \le 1$ and $0<\eps<1$. 

Now we bootstrap the above arguments. Let $\eps_0\in (0,1)$ be a fixed constant such that 
$$
\int_{\R^2} {w_-} > -a^* (1-\eps_0). 
$$
Assume we know that there is a $\alpha >0$ such that 
\begin{equation}\label{eq:induction}
 |e_{N,\eps}|\leq C_{\eps,\alpha} N^{\alpha} \mbox{ for all } 0< \eps < \eps_0.
\end{equation}
We can start the bootstrap from $\alpha = 2\beta$, using the simple one-body inequality\footnote{Here we do not need the improved bound $O(N^{2\beta-1})$ in ~\cite[Lemma~2]{LewNamRou-15} which requires $\hat w\in L^1$.} 
$$-\Delta_x - N^{2\beta} w(N^\beta x) \ge - C N^{2\beta}$$
Then, we can apply the above arguments to the $\eps$-perturbed Hamiltonian in~\eqref{eq:ener epsilon} to bound $e_{N,\eps}$. Note that the lower bound \eqref{eq:eH>0} remains valid with $w$ replaced by $(1-\eps)^{-1}w$, provided that $1<\eps<\eps_0$. Combining with \eqref{eq:induction} we obtain 
$$
|e_{N,\eps}| \le C_\eps + C_{\eps} \sqrt{\frac{\log (\Lambda)}{N}} \Lambda \log N   +  C_{\delta,\eps} \Lambda ^{(\delta -1)/2} N^{\alpha(1+3\delta) /2}, \quad \forall 0<\eps<\eps_0. 
$$
If we choose $\Lambda = N^{\alpha + a}$ and $\delta = 1/2 + b \in (1/2,1)$ in the above we obtain 
$$ |e_{N,\eps}| \leq C_{\eps,a,b} \left( 1 + N^{\alpha +a - 1/2} \log N  + N ^{\alpha + 2b\alpha + ab/2 - a/4} \right).$$
Clearly we can pick $0<a < 1/2$ and $b>0$ small enough so that this implies
$$ |e_{N,\eps}| \leq C_{\eps} \left( 1 + N ^{\alpha - c}\right)$$
for a fixed constant $c>0$ (independent of $N,\eps,\alpha$). Thus we have shown that if~\eqref{eq:induction} holds with $\alpha>0$, it also holds with $\alpha$ replaced by $\max(\alpha -c,0).$ After finitely many steps of this procedure, we deduce that actually $|e_{N,\eps}|$ is bounded independently of $N$, which implies the theorem.
\end{proof}


\end{document}